\definecolor{gray}{rgb}{0.2,0.2,.2}
\newtheorem{theorem}{Theorem}
\theoremstyle{plain}
\numberwithin{equation}{section}
\newcommand{\EQ}[1]{\begin{equation} \ensuremath{#1} \end{equation}}
\newcommand{\sig}[1]{\ensuremath{\sigma(#1)}}
\newcommand{\bprod}[2]{\ensuremath{\prod_{#1}^{#2}}}
\newcommand{\bsum}[2]{\ensuremath{\sum_{#1}^{#2}}}
\begin{document}
\title[Partial Proof of Dris's Conjecture]{A Partial Proof of a Conjecture of Dris}
\author{Patrick A. Brown}
\address{Dr. Brown is not currently affiliated with any institution. Feel free to \newline
\indent email him regarding any questions on this paper.}
\email{PatrickBrown496@gmail.com}
\urladdr{}

\thanks{}

\date{January 2016}
\subjclass[2010]{11-04, 11A25}
\keywords{Odd Perfect Numbers, Dris's Conjecture}
\dedicatory{}

\begin{abstract}
Euler showed that if an odd perfect number $N$ exists, it must consist of two parts $N=q^k n^2$, with $q$ prime, $q \equiv k \equiv 1 \pmod{4}$, and gcd$(q,n)=1$. Dris conjectured in \cite{dris2008} that $q^k < n$. We first show that $q<n$ for all odd perfect numbers. Afterwards, we show $q^k < n$ holds in many cases.
\end{abstract}
\maketitle

\section{Introduction}

We define \sig{N} to be the sum of the positive divisors of $N$ and note the following properties of $\sigma$, which we will use freely:
\begin{enumerate}
\item $\sigma(p^b)= 1+p+p^2+$ \ldots $+p^b$ for powers of primes.
\item $\sigma(ab) = \sigma(a)\sigma(b)$, whenever gcd$(a,b)=1$.
\item $(\frac{p-1}{p}) \sigma(p^{2b}) < p^{2b}$. (Note that $(\frac{2}{3}) \sigma(p^{2b}) < p^{2b}$ works for any odd prime $p$.)
\end{enumerate}

We say $N$ is perfect when $\sigma(N) = 2N$. Euler showed that if an odd perfect number $N$ exists, then its factorization consists of two parts. A special prime $q$ appearing an odd number, say $k$ times, such that $q \equiv k \equiv 1 \pmod{4}$. The rest of the primes in the factorization appear an even number of times, which we represent as $n^2$. It is understood that gcd$(q,n)=1$. When written as $N=q^k n^2$ we say $N$ is written in Eulerian form. The condition gcd$(q,n)=1$, implies $n \neq q$. Thus, it is interesting to determine conditions requiring and consequences of $n<q$ and $q<n$.

A well known result of Nielsen \cite{nielsen2006} states that $N$ must consist of at least 9 different odd primes, i.e. $n$ must have at least 8 unique factors. At first glance, it would seem reasonable to guess that $q<n$. However, a quick consideration of Descartes famous ``spoof'' odd perfect number:
\begin{center}
$N=3^2 7^2 11^2 13^2 22021$
\end{center}
where if one pretends for a moment that 22021 is prime, and that $\sigma(22021) = 22022$, then $\sigma(N)=2N$. For this example, $q=22021$ and $q>n$. So it seems a plausible question to ask that if an odd perfect number exists, is it necessary that the special prime dominate the rest of the factors?

Our initial intuitions turn out to be correct. Dris proved in \cite{dris2012} that $k > 1 \Longrightarrow q < n$. Acquaah and Konyagin \cite{Acqu2012} later showed $k=1 \Longrightarrow q < (3N)^{1/3}$ from which it is immediate that $q < \sqrt{3} n$. (Their proof having been modified from Luca and Pomerance \cite{luca2010}.) In \cite{dris2015B}, Dagal and Dris, utilize Acquaah and Konyagin's results to show $q<n$ so long as $3 \nmid N$. In section 2, we utilize Neilsen's result to make a simple adjustment to Acquaah and Konyagin's argument to conclude $k=1 \Longrightarrow q < n$, which allows us to conclude $q<n$ (and mildly stronger results) for all odd perfect numbers.

Dris conjectured in \cite{dris2008} that $q^k < n$. In section 3, we endeavor to prove this conjecture adjusting Acquaah and Konyagin's argument even further. We start with a proof of the simplest case and show the argument can be massaged to conclude $k > 1 \Longrightarrow q^k < n$. However, limitations in the method prevent a complete proof without additional assumptions in the second and third case.

\section{Proof of $q<n$}

\begin{theorem} \label{premain}
Let $N = q^k n^2$ be an odd perfect number written in Eulerian form, then $q<n$.
\end{theorem}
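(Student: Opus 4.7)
My plan is to split on the exponent $k$ of the Euler prime. When $k > 1$, Dris has already established $q < n$ in \cite{dris2012}, so that case requires nothing new; all of the effort goes into the case $k = 1$.

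For $k = 1$, the result of Acquaah and Konyagin in \cite{Acqu2012} gives $q < (3N)^{1/3}$, which rearranges to $q < \sqrt{3}\,n$ since $N = q n^2$. To upgrade this to $q < n$, it is enough to sharpen the constant from $3$ to $1$, i.e.\ to prove $q < N^{1/3}$. The plan is to retrace Acquaah and Konyagin's argument and pinpoint the step where that constant is introduced. In view of property (3) of $\sigma$, the natural suspect is the worst-case estimate $(p-1)/p \geq 2/3$, attained at $p = 3$: the factor of $3$ in their bound should arise from treating every prime factor of $n$ as if it were the smallest odd prime. Nielsen's theorem \cite{nielsen2006} guarantees that $n$ has at least eight distinct prime divisors, at most one of which equals $3$; the other seven are all at least $5$ and supply the much better ratios $(p-1)/p \geq 4/5$. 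Substituting these stronger estimates in at the right point of Acquaah and Konyagin's chain of inequalities should shrink the constant from $3$ down to at most $1$.

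The main obstacle is making this shrinkage both quantitative and correctly directed. A naive global substitution -- replacing the single bound $(p-1)/p \geq 2/3$ by the product over all primes dividing $n$ -- would actually move the constant in the \emph{wrong} direction, since $\prod_{p \mid n}(p-1)/p$ is much smaller than $2/3$ once many prime factors are present. So the Nielsen input must be inserted surgically, at exactly the step of Acquaah and Konyagin's argument that uses $p/(p-1)$ as an \emph{upper} bound and that isolates a single ``worst'' prime from the rest of $n$; the other prime factors of $n$ must be carried along via tighter estimates or via exact use of the perfectness identity $\sigma(n^2)/n^2 = 2q/(q+1)$. Once the refined inequality yields $q^3 \leq N = q n^2$, i.e.\ $q \leq n$, the coprimality $\gcd(q, n) = 1$ upgrades this to the strict inequality $q < n$.
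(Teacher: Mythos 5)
Your reduction to $k=1$ via Dris's theorem and your identification of the target inequality $q^3 < N$ both match the paper, as does the high-level idea that Nielsen's lower bound on the number of prime factors is the new ingredient that turns Acquaah--Konyagin's $q < \sqrt{3}\,n$ into $q < n$. But the proposal stops at the point where the actual work begins, and the mechanism you sketch is not the one that works. You locate the constant $3$ in the worst-case ratio $(p-1)/p \geq 2/3$ and propose to recover it by using $(p-1)/p \geq 4/5$ for the seven prime factors of $n$ exceeding $3$ --- while simultaneously (and correctly) observing that a global substitution of this kind pushes the constant the wrong way. You never resolve that tension: you do not say at which step the improved ratios enter, nor verify quantitatively that they recover a full factor of $3$. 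As written, the $k=1$ case is a research plan, not a proof.

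The paper's actual argument gains its factor of $3$ from a different source. Since $q \,\|\, 2N = \sigma(N)$ and $q \nmid \sigma(q)=q+1$, there is a unique prime $p$ with $q \mid \sigma(p^{2b})$. Writing $N = q\,p^{2b} r_1^{2\beta_1}\cdots r_j^{2\beta_j}$, one tracks the exact power of $p$ dividing each $\sigma(r_i^{2\beta_i})$; Nielsen's bound $j \geq 7$ guarantees that not every $\sigma(r_i^{2\beta_i})$ can be a pure power of $p$, so some $\sigma(r_1^{2\beta_1})$ contributes an extra odd prime factor $r_2 \geq 3$ to the lower bound for $\sigma(N)$. That single extra factor of $3$ exactly cancels the $2/3$ loss from $\sigma(p^{2b}) < \tfrac{3}{2}p^{2b}$ and yields $N > q^3$. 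Moreover the paper must split into two cases according to whether $p \mid \sigma(q)$; the second case needs a separate congruence argument (showing $u = \sigma(p^{2b})/q$ satisfies $u \equiv -1 \pmod p$, hence $u \geq 2p-1$, and $(p-1)u > p^{c_q}$) to control the powers of $p$ absorbed by $\sigma(q)$. Your proposal does not anticipate either the case split or this congruence step, so even granting your strategy in spirit, the proof it would produce is missing its two central ideas.
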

\begin{proof}
As mentioned above, the case $k>1$ has been established, so we assume k=1. Rewrite $N$ in full as
\begin{center}
$N = q p^{2b} r_{1}^{2\beta_{1}} r_{2}^{2\beta_{2}}$, \ldots, $r_{j}^{2\beta_{j}}$.
\end{center}

\noindent Where $p$ is the unique prime whereby $q|\sigma(p^{2b})$, $r_i$ for $1 \leq i \leq j$, represent the rest of the primes dividing $N$. When convenient, we will truncate $N$ as
\begin{center}
$N = q \ p^{2b} \ r_{1}^{2\beta_{1}} \ w^{2}$
\end{center}

\noindent Let $c_i \geq 0$ be the integer whereby $p^{c_i} || \sigma(r_{i}^{2\beta_{i}})$ for $1 \leq i \leq j$. Where we give ``$||$'' its standard meaning that $p^{c_i} | \sigma(r_{i}^{2\beta_{i}})$, but $p^{c_i+1} \nmid \sigma(r_{i}^{2\beta_{i}})$. It is possible that $p^{c_i} = \sigma(r_{i}^{2\beta_{i}})$ for any particular $i$, but since we know $n$ has at least eight components, at least one of the $\sigma(r_{i}^{2\beta_{i}})$ has to have factors other than $p$. Thus, we may rewrite subscripts and assume:
\begin{center}
$p^{c_1} r_{2} | \sigma(r_{1}^{2\beta_{1}})$
\end{center}

\begin{center}
\textbf{Case 1: $p \nmid \sigma(q)$}
\end{center}

\EQ{2N = \sigma(N) = \sigma(q) \sigma(p^{2b}) \sigma(r_{1}^{2\beta_{1}}) \sigma(w^2)}

\noindent Observe that $p \nmid \sigma(q)$ implies $p^{2b-c_1} || \sigma(w^2)$. Thus,

\EQ{2N > (q+1) \ q \ (p^{c_1} r_{2}) \ (p^{2b-c_1})}

\noindent We now utilize the fact that $p^{2b} > \frac{2}{3} \sigma(p^{2b})$ and $r_2$ being an odd prime means $r_2 \geq 3$.

\EQ{2N > q^2 \ (3) \ \frac{2}{3} \sigma(p^{2b})}

\EQ{2N > q^2 \ (3) \ \frac{2}{3} \ q}

\EQ{N > q^3}

\noindent from which $q < n$ easily follows.

\begin{center}
\textbf{Case 2: $p|\sigma(q)$}
\end{center}

\noindent Let $p^{c_q} || \sigma(q)$. Let $u=\sigma(p^{2b})/q$. Since
\EQ{\sigma(p^{2b}) \equiv 1 \pmod{p}, \ \ \ \ q \equiv -1 \pmod{p}}

\noindent we know $u \equiv -1 \pmod{p}$. Since $u$ is odd, we know $u \neq p-1$, and thus $u \geq 2p-1$.

\noindent By construction, we have $p^{2b-c_q-c_1} || \sigma(w^2)$, which implies
\EQ{\sigma(w^2) \geq p^{2b-c_q-c_1}}

\noindent Observe now,
\EQ{p^{2b+1}-1 = (p-1)\sigma(p^{2b}) = (p-1)uq = (p-1)u\sigma(q)-(p-1)u}
\noindent Therefore, $(p-1)u \equiv 1 \pmod{p^{c_q}}$. Which implies $(p-1)u > p^{c_q}$.

\noindent Combining the last two inequalities yields,
\EQ{\sigma(w^2)(p-1)u > p^{2b-c_1} \ \ \ \Longrightarrow \ \ \ \sigma(w^2)u > \frac{p^{2b-c_1}}{p-1}}

\noindent This should be all we need:

\EQ{2N = \sigma(N) = \sigma(q) \sigma(p^{2b}) \sigma(r_{1}^{2\beta_{1}}) \sigma(w^2)}

\EQ{2N > (q+1) \ uq \ (p^{c_1} r_{2}) \ \sigma(w^2)}

\EQ{2N > q^2 \frac{p^{2b-c_1}}{p-1} \ p^{c_1} r_{2}}

\EQ{2N > q^2 \ r_{2} \ \frac{p^{2b}}{p-1}}

\noindent Again, we utilize $p^{2b} > \frac{2}{3} \sigma(p^{2b})$.

\EQ{2N > q^2 \ r_{2} \ \frac{2 \sigma(p^{2b})}{3(p-1)}}

\EQ{2N > q^2 \ r_{2} \ \frac{2uq}{3(p-1)}}

\noindent Recall, $u \geq 2p-1$ and again $r_2$ being an odd prime means $r_2 \geq 3$.

\EQ{2N > q^3 \ 3 \ (\frac{2}{3}) \ \frac{2p-1}{(p-1)}}

\EQ{N > 2q^3}

\noindent And again, we get $q < n$.

\end{proof}

There is nothing special about this method of proof and the result of $q < n$ compared to Acquaah and Kanyagin's estimate $q < \sqrt{3} n$. If one is prepared to do the bookkeeping to account for extra unaccounted for factors $r_3$, $r_4$, $r_5$, etc., one can estimate them as $r_3 \geq 5$, $r_4 \geq 11$, $r_5 \geq 13$, etc. to get $q < \frac{n}{(\sqrt{5*11*13 \ldots})}$.

\section{Partial Proof of Dris's Conjecture}

Assume $N=q^k n^2$ is an odd perfect number written in Eulerian form. The case k=1 is proven in Theorem \ref{premain}, so we assume $k \geq 5$. Our goal is to prove $q^k < n$ with as few assumptions as possible.  Rewrite $N$ in full as

\EQ{N = q^k p_1^{2b_1} \ldots p_s^{2b_s} r^{2\beta} w^{2}}

\noindent Where the $p_i$ are the primes whereby $q^{t_i}||\sigma(p_i^{2b_i})$, for integers $t_i \geq 0$ and $1 \leq i \leq s$. It is convenient to name another prime $r$ separate from the $p_i$'s and allow $w^2$ to represent the rest of the primes dividing $N$. We do not assume a priori that $r$ and $w$ necessarily exist and in such cases we simply take one or both to be one.

Let $c_i \geq 0$ be the integer whereby $p_i^{c_i} || \sigma(p_1^{2b_1}$ \ldots $p_s^{2b_s})$ for $1 \leq i \leq s$.

\begin{center}
\textbf{Case 1: $p_i \nmid \sigma(q^k)$ for each $i$}
\end{center}

Because $k$ is odd, $\sigma(q^k) = (1+q)(1+q^2+q^4$ \ldots $+q^{k-1})$. It is straight forward to show $(1+q^2+q^4$ \ldots $+q^{k-1})$ is coprime to its formal derivative, which makes the polynomial separable, and as such has no repeated roots modulo any prime. Thus any prime dividing $(1+q^2+q^4$ \ldots $+q^{k-1})$, divides at most once. Let $r$ be a prime dividing $\sigma(q^k)$ such that $r || \sigma(q^k)$. By assumption $r$ is not any of the $p_i$'s. Also note that we may assume $r \geq 7$. If $r = 1+q^2+q^4$ \ldots $+q^{k-1}$, then clearly $r \geq 7$. Otherwise, we may assume $r \equiv 1 \pmod{\frac{k+1}{2}}$, by virtue of the fact that $r$ divides a cyclotomic polynomial. For the smallest exponent, $k=5$, $\frac{k+1}{2} = 3$; and the smallest $1 \pmod{3}$ prime is 7.

\EQ{2N = \sig{N} = \{\sig{q^k}\} \{\sig{p_1^{2b_1} \ldots p_s^{2b_s}}\} \{\sig{r^{2\beta} w^{2}}\}}

\EQ{2N > \{q^k\} \ \ \{q^k p_1^{c_1} \ldots p_s^{c_s}\} \ \ \{p_1^{2b_1-c_1} \ldots p_s^{2b_s-c_s}\} \ \ \{r\}}

\noindent Note that the quantity in each brace on the right hand side is less than or equal to the quantity in the respective brace in the previous line, with the exception of $\{r\}$. Since $r$ divides $N$ an even number of times, but can only divide \sig{q^k} once, we know $r$ must divide either \sig{p_1^{2b_1} \ldots p_s^{2b_s}} or \sig{r^{2\beta} w^{2}}, in addition to our previous assumptions.

\EQ{2N > q^{2k} (r) p_1^{2b_1} \ldots p_s^{2b_s}}

\noindent We utilize the fact that $(\frac{p-1}{p}) \sigma(p^{2b}) < p^{2b}$ and that $r \geq 7$.

\EQ{2N > q^{2k} (7) \ \bprod{i=1}{s} (\frac{p_i - 1}{p_i}) \ \sig{p_1^{2b_1} \ldots p_s^{2b_s}}}

\EQ{2N > q^{2k} (7) \ \bprod{i=1}{s} (\frac{p_i - 1}{p_i}) \ q^k p_1^{c_1} \ldots p_s^{c_s}}

\EQ{2N > q^{3k} (7) \ \bprod{i=1}{s} (1 - \frac{1}{p_i})}

\noindent Next we use the well known result, if $0 < \theta_i < 1$ for $i=1$, \ldots, $s$, then
\begin{center}
$\bprod{i=1}{s} (1-\theta_i) \geq 1 - \bsum{i=1}{s} \theta_i$.
\end{center}

\EQ{2N > q^{3k} (7) \ (1 - \bsum{i=1}{s} \frac{1}{p_i})}

\noindent While not the first to prove $\bsum{p|N}{} \frac{1}{p} < ln(2)$, Cohen gives a simple proof of this fact in (\cite{cohen1978}).

\EQ{2N > q^{3k} (7) \ (1 - ln(2))}

\noindent Since $7(1 - ln(2)) > 2.14$, we have 

\EQ{q^{3k} < N = q^k n^2 \Longrightarrow q^k < n}

\noindent as required.

\begin{center}
\textbf{Case 2: $s=1$ and $p_1 |\sigma(q^k)$}
\end{center}

In Section 2, case 2, the result relied on being able to find two inequalities $u \geq 2p-1$ and $(p-1)u \geq p^{c_q}$. The former depending on $p$ being unique and the latter depending on $k=1$, which made $\sig{q^k} = q+1$. To give a full proof of Dris's conjecture using this methodology, these two obstacles will have to be overcome. In this case, with $s=1$, we get $p_1$ is unique.

We procede as before, let $c_{1q} \geq 0$ be the integer for which $p_{1}^{c_{1q}} || \sigma(q^k)$ and let $u = \frac{\sig{p_1^{2b_1}}}{q^k}$. We may again conclude $u \equiv -1 \pmod {p_1}$ and $u \geq 2p_1-1$, however,

\EQ{p_1^{2b+1}-1 = (p_1-1)\sigma(p_1^{2b}) = (p_1-1)uq^k = (p_1-1)u\sig{q^k}-(p_1-1)u\sig{q^{k-1}}}

\noindent allows us to, at best, conclude $(p_1-1)u\sig{q^{k-1}} \equiv 1 \pmod p_1^{c_{1q}}$; which seems to be unhelpful.

We push on, let $v = \frac{\sig{w^2}}{p_1^{2b_1-c_{1q}}}$

\EQ{2N = \sig{N} = \sig{q^k} \sig{p_1^{2b_1}} \sig{w^2}}

\EQ{2N > q^k \ uq^k \ v p_1^{2b_1-c_{1q}}}

\EQ{2N > q^{2k} \ uv \ p_1^{2b_1} p_1^{-c_{1q}}}

\EQ{2N > q^{2k} \ uv \ \frac{p_1-1}{p_1} \sig{p_1^{2b_1}} p_1^{-c_{1q}}}

\EQ{2N > q^{2k} \ uv \ \frac{p_1-1}{p_1} \ uq^k p_1^{-c_{1q}}}

\EQ{2N > q^{3k} \ u^2 v \ \frac{p_1-1}{p_1} \ p_1^{-c_{1q}}}

\EQ{2N > q^{3k} (2p_1-1)^2 v \ \frac{p_1-1}{p_1} \ p_1^{-c_{1q}}}

\EQ{N > q^{3k} (2p_1^2 - 4p_1 + \frac{5}{2} - \frac{1}{2p_1})v \ p_1^{-c_{1q}}}

\noindent We see Dris's conjecture follows immediately whenever $c_{1q} \leq 2$ or, with more knowledge about $N$, when $vp_1^2 > p_1^{c_{1q}}$. By Neilsen's result, $w$ must have at least 7 components, which makes the latter inequality seem quite likely. Since these are amongst the first theorems relating components of an odd perfect number, more research is clearly needed.

\begin{center}
\textbf{Case 3: $s>1$ and $p_i |\sigma(q^k)$ for at least one $i$}
\end{center}

Let $c_{iq} \geq 0$ be the integer for which $p_{i}^{c_{iq}} || \sigma(q^k)$ for $1 \leq i \leq s$. We begin as before,

\EQ{2N = \sig{N} = \sig{q^k} \sig{p_1^{2b_1} \ldots p_s^{2b_s}} \sig{w^{2}}}

\EQ{2N > q^k \ q^k p_1^{c_1} \ldots p_s^{c_s} \ p_1^{2b_1-c_1-c_{1q}} \ldots p_s^{2b_s-c_s-c_{sq}}}

\EQ{2N > q^{2k} \ p_1^{2b_1} \ldots p_s^{2b_s} \ p_1^{-c_{1q}} \ldots p_s^{-c_{sq}}}

\EQ{2N > q^{2k} \ \bprod{i=1}{s} (\frac{p_i-1}{p_i}) \sig{p_1^{2b_1} \ldots p_s^{2b_s}} \ p_1^{-c_{1q}} \ldots p_s^{-c_{sq}}}

\EQ{2N > q^{2k} \ \bprod{i=1}{s} (\frac{p_i-1}{p_i}) \ q^k p_1^{c_1} \ldots p_s^{c_s} \ p_1^{-c_{1q}} \ldots p_s^{-c_{sq}}}

\EQ{2N > q^{3k} \ \bprod{i=1}{s} (\frac{p_i-1}{p_i}) \ p_1^{c_1-c_{1q}} \ldots p_s^{c_s-c_{sq}}}

\noindent Using $\bprod{i=1}{s} (\frac{p_i-1}{p_i}) > 1-ln(2)$, we see now the result $q^k < n$ follows whenever

\EQ{p_1^{c_1} \ldots p_s^{c_s} > \frac{2}{1-ln(2)} p_1^{c_{1q}} \ldots p_s^{c_{sq}}}

\noindent We recap our results thus far in the following

\begin{theorem} \label{main}
Let $N = q^k n^2$ be an odd perfect number written in Eulerian form, then $q<n$. 

Write $N = q^k p_1^{2b_1}$, \ldots, $p_s^{2b_s} w^2$, where $q|\sig{p_i}$ for $1 \leq i \leq s$. Let $c_i$, $c_{iq} \geq 0$ be integers where $p_i^{c_i} || \sig{p_1^{2b_1}, \ldots, p_s^{2b_s}}$ and $p_i^{c_{iq}} || \sig{q^k}$ for $1 \leq i \leq s$.

If $k>1$ and one of the following holds:
\begin{enumerate}
\item $p_i \nmid \sig{q^k}$ for $1 \leq i \leq s$;
\item $s=1$, $p_1 |\sigma(q^k)$, and $c_{1q} \leq 2$;
\item $s>1$, and $p_i | \sig{q^k}$ for at least one $p_i$ and
\begin{center}
$p_1^{c_1} \ldots p_s^{c_s} \geq 7 p_1^{c_{1q}} \ldots p_s^{c_{sq}}$
\end{center}
\end{enumerate}
then $q^k < n$.
\end{theorem}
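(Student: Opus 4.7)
My proof plan follows the structure already laid out in Sections 2 and 3. The unconditional assertion $q<n$ is exactly Theorem \ref{premain}, so the only real work is the conditional statement $q^k<n$ for $k>1$ under one of the three case hypotheses. Throughout, the main tool is the multiplicative expansion $2N=\sigma(q^k)\,\sigma(p_1^{2b_1}\cdots p_s^{2b_s})\,\sigma(w^2)$, combined with the two standard ingredients $(p-1)\sigma(p^{2b})<p^{2b+1}$ and Cohen's bound $\sum_{p\mid N}1/p<\ln 2$.

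For Case (1), where no $p_i$ divides $\sigma(q^k)$, I would start by locating an auxiliary prime $r\mid\sigma(q^k)$. Because $1+q^2+q^4+\cdots+q^{k-1}$ is coprime to its formal derivative, it is separable, so any such $r$ appears to the first power in $\sigma(q^k)$; a short cyclotomic argument using $r\equiv 1\pmod{(k+1)/2}$ together with the smallest case $k=5$ forces $r\ge 7$. Since $r^2\mid N$ but $r$ can contribute only one copy to $\sigma(q^k)$, an extra factor of $r$ must appear somewhere in $\sigma(p_1^{2b_1}\cdots p_s^{2b_s})\sigma(w^2)$. Substituting this into the expansion, replacing each $\sigma(p_i^{2b_i})$ with the $(p_i/(p_i-1))p_i^{2b_i}$ bound, and applying $\prod(1-1/p_i)\ge 1-\sum 1/p_i>1-\ln 2$, I would collect the terms to obtain $2N>7(1-\ln 2)\,q^{3k}>2q^{3k}$, and hence $q^k<n$.

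For Cases (2) and (3), where some $p_i$ divides $\sigma(q^k)$, I would introduce the quotient $u=\sigma(p_1^{2b_1})/q^k$ (and its multi-prime analogue in Case 3) and a loss factor $p_i^{c_{iq}}$ measuring how much of $\sigma(q^k)$ is absorbed by $p_i$. In Case (2), with $s=1$ and $p_1$ unique, the congruence $\sigma(p_1^{2b_1})\equiv 1\pmod{p_1}$ against $q^k\equiv -1\pmod{p_1}$ still forces $u\equiv -1\pmod{p_1}$ and hence $u\ge 2p_1-1$. Substituting into $2N=\sigma(q^k)\sigma(p_1^{2b_1})\sigma(w^2)$ and simplifying yields a bound of the shape $N>q^{3k}\bigl(2p_1^2-4p_1+O(1)\bigr)v\,p_1^{-c_{1q}}$, which exceeds $q^{3k}$ whenever $c_{1q}\le 2$. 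In Case (3) the analogous calculation yields $2N>q^{3k}(1-\ln 2)\prod_i p_i^{c_i-c_{iq}}$, which delivers $q^k<n$ as soon as $\prod p_i^{c_i}\ge (2/(1-\ln 2))\prod p_i^{c_{iq}}$; bounding $2/(1-\ln 2)$ above by $7$ gives the cleaner hypothesis stated in the theorem.

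The main obstacle, and the reason Cases (2) and (3) require auxiliary hypotheses rather than being absorbed into a unified argument, is that when $k>1$ the identity $\sigma(q^k)=q+1$ used in Theorem \ref{premain} no longer holds. The clean telescoping that previously produced $(p-1)u>p^{c_q}$ degrades to $(p-1)u\,\sigma(q^{k-1})\equiv 1\pmod{p_1^{c_{1q}}}$, which is too weak to directly cancel the $p_1^{-c_{1q}}$ loss in the final estimate. Finding a sharper modular or structural constraint on $u$ (or extracting more from the seven unaccounted primes dividing $w$, via Nielsen's lower bound) is where any full proof of Dris's conjecture along these lines must succeed; my plan therefore concedes those cases and records only the partial conclusion above.
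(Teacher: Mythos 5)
Your proposal is correct and follows essentially the same route as the paper: the same three-case split, the same auxiliary prime $r\ge 7$ extracted from the separability/cyclotomic argument in Case (1), the same quotient $u\ge 2p_1-1$ and loss factor $p_1^{-c_{1q}}$ in Case (2), and the same $\prod p_i^{c_i-c_{iq}}$ bookkeeping with $2/(1-\ln 2)<7$ in Case (3). Your closing diagnosis of why the telescoping identity degrades for $k>1$ matches the paper's own discussion.
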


\section{Further Considerations}

The condition $p_1^{c_1} \ldots p_s^{c_s} \geq 7 p_1^{c_{1q}} \ldots p_s^{c_{sq}}$ in Theorem \ref{main} seems to suggest Dris's conjecture holds if
\begin{center}
\bprod{i \neq j}{} gcd$(\sig{p_i^{2b_i}p_j^{2b_j}},p_i^{2b_i}p_j^{2b_j}) >$ \bprod{i=1}{s} gcd$(\sig{q^k},p_i^{2b_i})$.
\end{center}
Again, at first glance, it seems nothing can be said about this situation, but Dandapat, Hunsucker, and Pomerance in \cite{dandapat1975}, Theorem 2 implies for each fixed $i$, there is a $j$ where
\begin{center}
gcd$(\sig{p_i^{2b_i}p_j^{2b_j}},p_i^{2b_i}p_j^{2b_j})>1$ for $i \neq j$
\end{center}
holds for most non-special components of $n$, not just for the restricted $p_i$'s as we have defined them.

The next obvious question may be for $N = q^k p^{2b} w^2$, an odd perfect number where $q$ is the special prime, $p$ is any other prime dividing $N$, and $w^2$ encompasses the rest of the components of $N$, in the same way we showed $q<n$, can we show $p<w$, $p^b < w$, or even $p^{2b} < w$?

\bibliography{references}{}
\bibliographystyle{plain}

\end{document}